\theoremstyle{plain}
\newtheorem{theorem}{Theorem}[section]
\newtheorem*{theorem*}{Theorem}
\newtheorem*{prop*}{Proposition}
\newtheorem{lemma}[theorem]{Lemma}
\newtheorem*{lemma*}{Lemma}
\newtheorem{cor}[theorem]{Corollary}
\newtheorem*{cor*}{Corollary}
\theoremstyle{definition}
\newtheorem*{defn*}{Definition}
\newtheorem{remark}[theorem]{Remark}
\newtheorem*{remark*}{Remark}
\newtheorem{example}[theorem]{Example}
\newcommand{\N}{\mathbf N}
\newcommand{\Z}{\mathbf Z}
\renewcommand{\setminus}{-}
\title[Degenerations and Orbits]{Degenerations and orbits in\\finite abelian groups}
\author{Kunal Dutta}
\author{Amritanshu Prasad}
\address{The Institute of Mathematical Sciences, Chennai.}
\subjclass[2000]{20K01,05A15}
\keywords{Finite abelian groups, orbits, partial order}
\begin{document}
\begin{abstract}
  A notion of degeneration of elements in groups is introduced.
  It is used to parametrize the orbits in a finite abelian group under its full automorphism group by a finite distributive lattice.
  A pictorial description of this lattice leads to an intuitive self-contained exposition of some of the basic facts concerning these orbits, including their enumeration.
  Given a partition $\lambda$, the lattice parametrizing orbits in a finite abelian $p$-group of type $\lambda$ is found to be independent of $p$.
  The order of the orbit corresponding to each parameter, which turns out to be a polynomial in $p$, is calculated.
  The description of orbits is extended to subquotients by certain characteristic subgroups.
  Each such characteristic subquotient is shown to have a unique maximal orbit.
\end{abstract}
\maketitle
\section{Background}
\label{sec:background}
Let $A$ be a finite abelian group and $G$ be its group of automorphisms.
This paper concerns the set $G\backslash A$ of $G$-orbits in $A$.

For each prime number $p$, let $A_p$ denote the $p$-primary part of $A$ (elements of $A$ annihilated by some power of $p$).
Then $A$ is the direct sum of the subgroups $A_p$, and $G$ is the product of the automorphism groups $G_p$ of $A_p$.
In fact,
\begin{equation*}
  G\backslash A = \prod_p G_p\backslash A_p.
\end{equation*}
Each finite abelian $p$-group in turn is isomorphic to 
\begin{equation*}
  A_{\lambda,p}=\Z/p^{\lambda_1}\Z \oplus \cdots \oplus \Z/p^{\lambda_l}\Z
\end{equation*}
for a unique non-increasing sequence $\lambda=(\lambda_1\geq \cdots \geq \lambda_l)$ of positive integers (in other words, a partition).
Write $G_{\lambda,p}$ for the automorphism group of $A_{\lambda,p}$.
Therefore, in order to study $G\backslash A$, it suffices to study $G_{\lambda,p}\backslash A_{\lambda,p}$ for each prime number $p$ and each partition $\lambda$.

A well-known formula for the cardinality of $G_{\lambda,p}\backslash A_{\lambda,p}$ goes back more than a hundred years to Miller \cite{1905}:
\begin{equation}
  \label{eq:1}
  |G_{\lambda,p}\backslash A_{\lambda,p}|=(\lambda_l+1)\prod_{i=1}^{l-1} (\lambda_i-\lambda_{i+1}+1)
\end{equation}
Another formula that appears in the literature (Schwachh\"offer and Stroppel \cite{MR1656579}) is given as follows: let $\tau_1<\tau_2<\cdots<\tau_t$ be the distinct natural numbers occurring in the partition $\lambda$.
Then
\begin{equation}
  \label{eq:2}
  |G_{\lambda,p}\backslash A_{\lambda,p}|=\sum_{k=0}^t\:\:\sum_{1\leq i_1<\cdots<i_k\leq t}\: \tau_{i_k} \prod_{j=1}^{k-1}(\tau_{i_j}-\tau_{i_{j+1}}-1).
\end{equation}
In this formula, the summand corresponding to $k=0$ is taken as $1$.
When $p\neq 2$, the map taking each element in $A_{p,\lambda}$ to the smallest characteristic subgroup containing it is a bijection between $G_{\lambda,p}\backslash A_{\lambda,p}$ and the set of characteristic subgroups of $A_{\lambda,p}$ (subgroups which are invariant under $G_{\lambda,p}$) (see Baer \cite[Sections~3 and 4]{Baer35} and Birkhoff \cite[Section~10]{GarrettBirkhoff01011935}).
Characteristic subgroups, partially ordered by inclusion, form a finite distributive lattice.
By the Fundamental Theorem for Finite Distributive Lattices \cite[Theorem~3.4.1]{MR1442260} there is a unique poset of which this lattice is the lattice of order ideals.
This poset was used by Kerby and Turner \cite{0905.1885v1} to prove that the lattice of characteristic subgroups in $A_{p,\lambda}$ distinguishes between the $\lambda$'s (except $\lambda=(4,2,1)$ and $(5,2)$) and is independent of $p$ when $p\neq 2$.
\section{Outline of this article}
\label{sec:outline-this-article}
Central to this article is the notion of degeneration:
\begin{defn*}
  [Degeneration]
  If $A$ and $B$ are groups, $a\in A$ and $b\in B$, we say that $a$ degenerates to $b$ (denoted $a\to b$) if there exists a homomorphism $\phi:A\to B$ such that $\phi(a)=b$.
\end{defn*}
For finite abelian groups, any homomorphism $\phi:A\to B$ maps $A_p$ into $B_p$.
 Therefore, it suffices to fix a prime $p$ and restrict attention to finite abelian $p$-groups.
Assume without any loss of generality that  $A=A_{p,\lambda}$ and that $B=A_{p,\mu}$ for partitions $\lambda=(\lambda_1\geq\cdots\geq\lambda_l)$ and $\mu=(\mu_1\geq\cdots\geq\mu_m)$.
Theorem~\ref{theorem:degeneracy} gives necessary and sufficient conditions for an element of $A$ to degenerate to an element of $B$ in terms of order ideals in a certain poset which we call the \emph{fundamental poset}, which is independent of $p$.

For an element $a\in A$, let $[a]$ denote its orbit under the automorphism group $G$ of $A$.
Clearly, degeneration descends to a relation on orbits: if $[a]=[a']$ and $[b]=[b']$ then $a\to b$ if and only if $a'\to b'$.
Write $[a]\geq [b]$ if $a\to b$.
In the special case where $B=A$, we show that \lq$\geq$\rq{} is a partial order on $G\backslash A$.
More precisely for $A=A_{p,\lambda}$, we show (Theorem~\ref{theorem:orbits}) that this partially ordered set is the lattice of ideals in an induced subposet $P_\lambda$ of the fundamental poset determined by $\lambda$, and hence a distributive lattice which is independent of $p$ ($P_\lambda$ determines $\lambda$, except for $(4,2,1)$ and $(5,2)$, which have isomorphic $P_\lambda$'s \cite{0905.1885v1}).

Viewing orbits as ideals in $P_\lambda$ allows us to interpret their enumeration given by (\ref{eq:1}) and (\ref{eq:2}), the first as counting ideals in terms of their boundaries, and the second as counting them in terms of their antichains of maximal elements (Section~\ref{sec:enumeration}).

In Section~\ref{sec:subq-char-subgr}, following \cite{1905,Baer35,GarrettBirkhoff01011935} the lattice of orbits of $A_{p,\lambda}$ is embedded in the lattice of characteristic subgroups (this embedding is an isomorphism if $p\neq 2$).
A simple formula for the order of the characteristic subgroup associated to an orbit is obtained.
This formula, along with the M\"obius inversion formula, is used to show that the cardinality of the $G_{p,\lambda}$-orbit in $A_{p,\lambda}$ corresponding to an ideal $I$ in $P_\lambda$ is a monic polynomial in $p$ with integer coefficients (Theorem ~\ref{theorem:polynomial}).
This polynomial is computed by two different methods (Theorems~\ref{theorem:orbits-order} and~\ref{theorem:visual-order}).

In Section~\ref{sec:orbits-subq-char}, the results of Sections~\ref{sec:orbits-ideals} and~\ref{sec:orders-orbits} are extended to $G_{p,\lambda}$-orbits in a subquotient by characteristic subgroups associated to orbits.
A consequence of this combinatorial description is the existence of a unique maximal orbit in each subquotient (Section~\ref{sec:maximal-orbits}).
For large $p$, a randomly chosen element of the subquotient is most likely in the maximal orbit (Theorem~\ref{theorem:maximal-size}).

We were motivated by attempts to understand the decomposition of the Weil representation associated to a finite abelian group $A$.
The sum of squares of the multiplicities in the Weil representation is the number of orbits in $A\times \hat A$ under automorphisms of a symplectic bicharacter ($\hat A$ denotes the Pontryagin dual of $A$) \cite{gdft}.
Techniques developed in this paper can be carried over to the symplectic setting, giving results about Weil representations \cite{Wfg}.

Our results and their proofs remain valid for orbits in a finitely generated torsion-module over a Dedekind domain under the group of module automorphisms.
Generalizing in another direction, all the results in this article, except for those in Section~\ref{sec:orders-orbits}, Corollary~\ref{cor:orbit-order} and Theorem~\ref{theorem:maximal-size}, hold for abelian groups of bounded order (see \cite[Chapter~18]{Kaplansky} and \cite{MR1656579}).
\section{The Fundamental Poset}
\label{sec:fundamental-poset}
\begin{lemma}
  \label{lemma:fp}
  Let $u$, $v$, $r$, $s$, $k$ and $l$ be non-negative integers such that $u$ and $v$ are not divisible by $p$, $r<k$ and $s<l$.
  Then $p^r u\in \Z/p^k\Z$ degenerates to $p^s v\in \Z/p^l\Z$ if and only if $r\leq s$ and $k-r\geq l-s$.
  If in addition, $p^s v\in \Z/p^l\Z$ degenerates to $p^r u\in \Z/p^k\Z$ then $k=l$ and $r=s$.
\end{lemma}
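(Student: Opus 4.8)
The plan is to reduce everything to an explicit description of the homomorphisms $\Z/p^k\Z\to\Z/p^l\Z$. Such a homomorphism $\phi$ is determined by $c:=\phi(1)\in\Z/p^l\Z$, and an element $c$ arises this way precisely when $p^kc=0$ in $\Z/p^l\Z$, i.e. when $c$ lies in the subgroup $p^{\max(l-k,\,0)}\Z/p^l\Z$. For such $\phi$ one has $\phi(p^ru)=p^ru\,c$, and since $u$ is prime to $p$ it is a unit in $\Z/p^l\Z$, so as $c$ runs over $p^{\max(l-k,0)}\Z/p^l\Z$ the element $p^ru\,c$ runs over exactly the subgroup $p^{\,r+\max(l-k,0)}\Z/p^l\Z$. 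Hence $p^ru$ degenerates to $p^sv$ if and only if $p^sv\in p^{\,r+\max(l-k,0)}\Z/p^l\Z$.

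Next I would record the elementary fact that, because $v$ is prime to $p$ and $s<l$, the element $p^sv$ has $p$-adic valuation exactly $s$ in $\Z/p^l\Z$, so that $p^sv\in p^{\,j}\Z/p^l\Z$ if and only if $j\le s$. Taking $j=r+\max(l-k,0)$ turns the degeneration condition into the single inequality $r+\max(l-k,0)\le s$, and it remains only to check that this is equivalent to the asserted pair ``$r\le s$ and $k-r\ge l-s$''. This I would do by the case split $k\ge l$ (then $\max(l-k,0)=0$, the inequality reads $r\le s$, and $k-r\ge l-s$ follows automatically) versus $k<l$ (then it reads $k-r\ge l-s$, which already forces $r<s$ since $l-k\ge 1$). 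One can also obtain necessity more directly: $\phi$ cannot increase order, so $p^{l-s}\mid p^{k-r}$, giving $k-r\ge l-s$; and $\phi(p^ru)=p^r\phi(u)$ is divisible by $p^r$ while $p^sv$ has valuation $s$, giving $r\le s$. Sufficiency can then be shown by exhibiting $\phi$ explicitly via $c\equiv p^{s-r}u^{-1}v\pmod{p^{l-r}}$, choosing the representative divisible by $p^{\,l-k}$ when $k<l$ (possible because $k-r\ge l-s$ gives $s-r\ge l-k$).

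For the final assertion, suppose in addition that $p^sv$ degenerates to $p^ru$. Applying the equivalence just proved in this direction gives $s\le r$ and $l-s\ge k-r$; combining with $r\le s$ and $k-r\ge l-s$ yields $r=s$ and $k-r=l-s$, whence $k=l$. This step is immediate once the main equivalence is established.

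The only part that needs real care is the sufficiency direction: checking that the homomorphism one writes down is well defined, i.e. that the chosen $c=\phi(1)$ really is killed by $p^k$ modulo $p^l$. This is exactly where the hypothesis $k-r\ge l-s$ does its work (it supplies $s-r\ge l-k$, so a representative of $c$ divisible by $p^{\,l-k}$ exists when $k<l$), and it is the one point where the distinction between $k\ge l$ and $k<l$ is genuinely needed. Everything else is routine arithmetic modulo prime powers.
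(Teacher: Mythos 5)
Your proof is correct and in essence the same as the paper's: the necessity argument via the two valuation bounds ($r\le s$ from $\phi(p^ru)=p^r\phi(u)$, and $k-r\ge l-s$ from order considerations) and the sufficiency argument via the explicit map $1\mapsto p^{s-r}u^{-1}v$ are exactly what the paper does, and the final assertion follows formally in both treatments. Your preliminary repackaging --- describing the set of possible images of $p^ru$ as the subgroup $p^{\,r+\max(l-k,0)}\Z/p^l\Z$ and reducing to the single inequality $r+\max(l-k,0)\le s$ --- is a clean uniform way to see both directions at once, but it is only a reorganization, not a different method.
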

\begin{proof}
  If $\phi:\Z/p^k\Z\to \Z/p^l\Z$ takes $p^r u$ to $p^s v$ then $p^s v=\phi(p^r u)=p^r\phi(u)$, from which it follows that $s\geq r$.
  Also,
  \begin{equation*}
    0=\phi(p^k u)=p^{k-r}\phi(p^r u)=p^{k-r}p^s v=p^{k-r+s}v.
  \end{equation*}
  Therefore $k-r+s\geq l$, giving $k-r\geq l-s$.

  Conversely, if $r\leq s$ and $k-r\geq l-s$ then there is a unique homomorphism $\Z/p^k\Z\to \Z/p^l\Z$ for which $1\mapsto v u^{-1}p^{s-r}$ (where $u^{-1}$ is a multiplicative inverse of $u$ modulo $p^l$).
  This homomorphism maps $p^r u$ to $p^s v$.

  The second part of the lemma is an immediate consequence of the first.
\end{proof}
The group $\Z/p^k\Z$ has $k$ orbits of non-zero elements under the action of its automorphism group, represented by $1,p,\ldots,p^{k-1}$.
Let $P$ be the disjoint union over all $k\in \N$ of the orbits of non-zero elements in $\Z/p^k\Z$.
We denote the orbit of $a$ in $\Z/p^k\Z$ by $(a,k)$.
Degeneracy descends to a relation on $P$.
It follows from Lemma~\ref{lemma:fp} that this reflexive and transitive relation is in fact a partial order.
The Hasse diagram for $P$, which we call the fundamental poset, is given in Figure~\ref{fig:funda}.
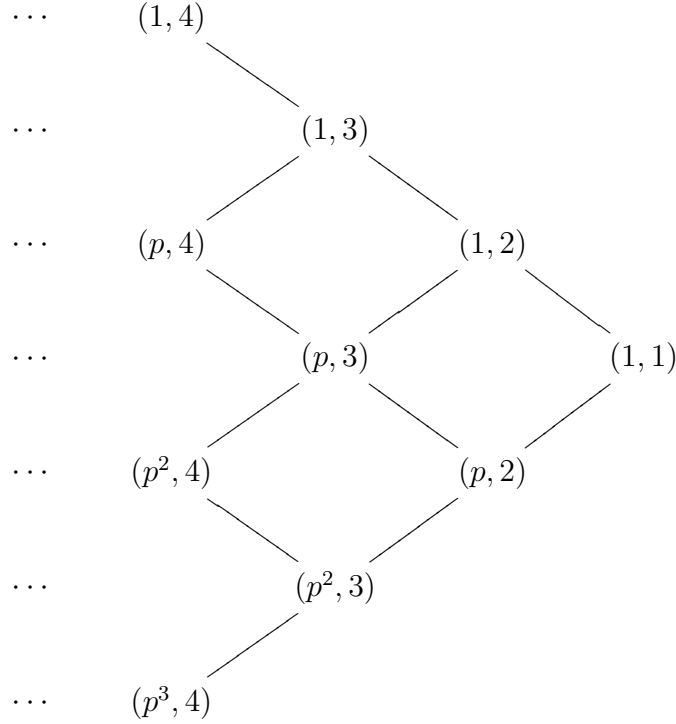
\begin{figure}
  \centering
  \begin{equation*}
    \xymatrix{
      \cdots & (1,4) \ar@{-}[dr] & & & \\
      \cdots && (1,3) \ar@{-}[dr] & & \\
      \cdots &(p,4) \ar@{-}[dr] \ar@{-}[ur] & & (1,2) \ar@{-}[dr] & \\
      \cdots && (p,3) \ar@{-}[dr] \ar@{-}[ur] & & (1,1)\\
      \cdots &(p^2,4) \ar@{-}[dr] \ar@{-}[ur] & & (p,2) \ar@{-}[ur] & \\
      \cdots && (p^2,3) \ar@{-}[ur] & & \\
      \cdots &(p^3,4) \ar@{-}[ur] & & &\\
    }
  \end{equation*}
  \caption{The fundamental poset $P$}
  \label{fig:funda}
\end{figure}
\begin{remark}
Let $P_n$ denote the induced subposet of $P$ consisting of the orbits in the groups $\Z/p^k\Z$ for $k=1,\ldots,n$.
One of the characterizations of the Catalan numbers is as the number of lattice paths from the maximal element of $P_n$ to its minimal element \cite[Example~3.5.5]{MR1442260}.
Say that a degeneration $a\to b$ is strict if $b$ does not degenerate to $a$.
Thus the number of maximal chains of strict degenerations of orbits in the groups $\Z/p^k\Z$, $k=1,\dots,n$ is a Catalan number.
\end{remark}
\section{Degeneracy and ideals}
Given $a=(a_1,\ldots,a_l)\in A_{p,\lambda}$, we refer to as the \emph{ideal of $a$}, the ideal in $P$ generated by the orbits of the non-zero coordinates $a_i\in \Z/p^{\lambda_i}\Z$ of $a$. 
We denote this ideal by $I(a)$ (see Example~\ref{example:ideals}).
\begin{theorem}
  \label{theorem:degeneracy}
  Given partitions $\lambda$ and $\mu$, $a\in A_{p,\lambda}$ and $b\in A_{p,\mu}$, $a$ degenerates to $b$ if and only if $I(a)\supset I(b)$.
\end{theorem}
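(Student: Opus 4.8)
The plan is to prove the two implications separately, in each case reducing to the local computation of Lemma~\ref{lemma:fp} and using the fact that a homomorphism between finite direct sums of cyclic groups is precisely a ``matrix'' of homomorphisms between the summands, i.e. $\mathrm{Hom}\bigl(\bigoplus_i\Z/p^{\lambda_i}\Z,\bigoplus_j\Z/p^{\mu_j}\Z\bigr)=\prod_{i,j}\mathrm{Hom}(\Z/p^{\lambda_i}\Z,\Z/p^{\mu_j}\Z)$, with no compatibility constraints among the entries.

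\emph{Forward direction.} Suppose $\phi\colon A_{p,\lambda}\to A_{p,\mu}$ is a homomorphism with $\phi(a)=b$. Write $\phi$ in components $\phi_{ij}\colon\Z/p^{\lambda_i}\Z\to\Z/p^{\mu_j}\Z$, so that $b_j=\sum_i\phi_{ij}(a_i)$ for each $j$. Each $\phi_{ij}$ exhibits a degeneration $a_i\to\phi_{ij}(a_i)$; hence, whenever $a_i$ and $\phi_{ij}(a_i)$ are both non-zero, the orbit of $a_i$ dominates the orbit of $\phi_{ij}(a_i)$ in $P$ (degeneration being well-defined on orbits), so the latter lies in $I(a)$, which is the down-set generated by the orbits of the non-zero $a_i$. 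The remaining point --- the crux of this direction --- is that ideals of $P$ are closed under addition within a fixed cyclic group: if $c_1,\dots,c_n\in\Z/p^k\Z$ all have orbit in a down-set $I$, then so does $c_1+\cdots+c_n$. Indeed, if the sum is non-zero, pick a non-zero $c_{i_0}$ of minimal $p$-valuation; then $v_p(\sum c_i)\ge v_p(c_{i_0})$, and within $\Z/p^k\Z$ increasing the $p$-valuation moves down in $P$ (Lemma~\ref{lemma:fp} with $k=l$), so the orbit of the sum is $\le$ the orbit of $c_{i_0}\in I$ and therefore lies in $I$. Applying this to $b_j=\sum_i\phi_{ij}(a_i)$ for each $j$ with $b_j\ne 0$ shows the orbit of $b_j$ lies in $I(a)$; since these orbits generate $I(b)$, we get $I(b)\subset I(a)$.

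\emph{Converse.} Suppose $I(b)\subset I(a)$. Fix $j$ with $b_j\ne 0$; its orbit lies in $I(b)\subset I(a)$, and since $I(a)$ is generated by the orbits of the non-zero coordinates of $a$, there is an index $i(j)$ with $a_{i(j)}\ne 0$ whose orbit dominates that of $b_j$ in $P$. By the ``if'' part of Lemma~\ref{lemma:fp} there is a homomorphism $\psi_j\colon\Z/p^{\lambda_{i(j)}}\Z\to\Z/p^{\mu_j}\Z$ with $\psi_j(a_{i(j)})=b_j$. Now define $\phi\colon A_{p,\lambda}\to A_{p,\mu}$ by setting its $(i(j),j)$ component equal to $\psi_j$ for each $j$ with $b_j\ne 0$ and all other components to $0$; by the direct-sum description above this is a genuine homomorphism, and $\phi(a)_j=\psi_j(a_{i(j)})=b_j$ when $b_j\ne 0$ while $\phi(a)_j=0=b_j$ otherwise. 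Thus $\phi(a)=b$ and $a\to b$.

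\emph{Expected main obstacle.} The only genuinely new ingredient is the addition-closure property of ideals of $P$ used in the forward direction; isolating and proving it (via the $p$-valuation inequality together with the ``same $k$'' case of Lemma~\ref{lemma:fp}) is the heart of the argument. The rest is bookkeeping --- chiefly making sure orbits are invoked only for non-zero elements and that $I(a)$ and $I(b)$ legitimately ignore zero coordinates --- and the formal observation that homomorphisms out of a direct sum into a direct sum decompose componentwise.
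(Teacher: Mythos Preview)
Your proof is correct and follows essentially the same route as the paper: decompose a homomorphism between direct sums into a matrix of cyclic homomorphisms, use Lemma~\ref{lemma:fp} coordinatewise, and for the forward direction reduce to the fact that a sum in a fixed cyclic $p$-group is dominated (in $P$) by one of its summands. The only difference is expository: the paper simply asserts that ``in a cyclic abelian group, any sum is a degeneration of at least one of its terms,'' whereas you isolate this as the addition-closure property and supply the $p$-valuation argument that justifies it.
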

\begin{proof}
  Take $\lambda=(\lambda_1\geq\cdots\geq\lambda_l)$, $\mu=(\mu_1\geq\cdots\geq\mu_m)$.   
  If $I(a)\supset I(b)$ then for each coordinate $b_i\in \Z/p^{\mu_i}\Z$ of $b$ there exists a coordinate $a_j\in \Z/p^{\lambda_j}\Z$ and a homomorphism $\phi_{ij}:\Z/p^{\lambda_j}\Z\to \Z/p^{\mu_i}\Z$ such that $\phi_{ij}(a_j)=b_i$.
  For each $i$, choose such a $j$ and $\phi_{ij}$.
  For all other pairs $(i,j)$ let $\phi_{ij}=0$.
  Then the homomorphism $A_{p,\lambda}\to A_{p,\mu}$ with matrix given by $(\phi_{ij})$ then takes $a$ to $b$.

Conversely, suppose that $\phi:A_{p,\lambda}\to A_{p,\mu}$ is a homomorphism such that $\phi(a)=b$.
Then
\begin{equation*}
  b_i=\sum_{j=1}^l\phi_{ij}(a_j)
\end{equation*}
for homomorphisms $\phi_{ij}:\Z/p^{\lambda_j}\Z\to \Z/p^{\mu_i}\Z$.
It is clear that, in a cyclic abelian group, any sum is a degeneration of at least one of its terms.
Therefore, using transitivity of degeneration, if $b_i\neq 0$, then it is a degeneration of at least one entry of $a$, from which it follows that $I(b)\subset I(a)$.
\end{proof}
\begin{example}
  \label{example:ideals}
  Let $\lambda=\mu=(7,5,3,3,2)$, $a=(p^5,p,p^2,1,p)$ and $b=(p^4,p^4,p,p,0)$.
  The ideals $I(a)$ are $I(b)$ of $P$ are shown in Figure~\ref{fig:ideals}.
  It follows from Theorem~\ref{theorem:degeneracy} that $a$ degenerates to $b$.
\end{example}
\begin{figure}
  \centering
  \begin{tabular}{cc}
    \includegraphics[width=0.4\textwidth, trim = 0.5cm 23.05cm 17.4cm 1.05, clip]{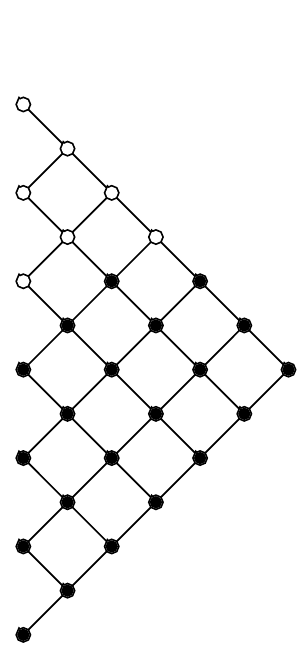}&
    \includegraphics[width=0.4\textwidth, trim = 0.5cm 20cm 17.45cm 4.05cm, clip]{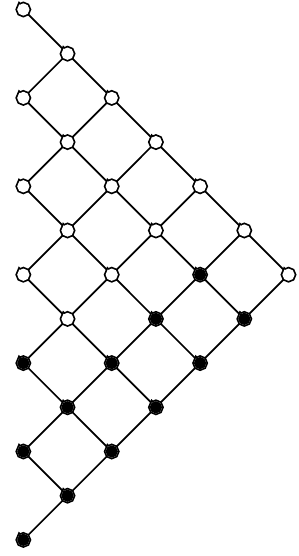}\\
    $I(a)$ & $I(b)$
  \end{tabular}
  \caption{Ideals associated to elements}
  \label{fig:ideals}
\end{figure}
\section{Orbits and ideals}
\label{sec:orbits-ideals}
Given a partition $\lambda=(\lambda_1\geq\cdots\geq\lambda_l)$, let $P_{\lambda}$ be the induced subposet of $P$ consisting of orbits of the form $(a,k)$ (notation as in Section~\ref{sec:fundamental-poset}) with $k=\lambda_i$ for some $i=1,\ldots,l$.
\begin{example}
  For $\lambda=(7,5,3,3,2)$ the construction of the poset $P_\lambda$ is shown in Figure~\ref{fig:construction}.
\end{example}
\begin{figure}
  \centering
  \begin{tabular}{cc}
    \includegraphics[width=0.4\textwidth, trim= 0.5cm 19.7cm 17cm 3cm, clip]{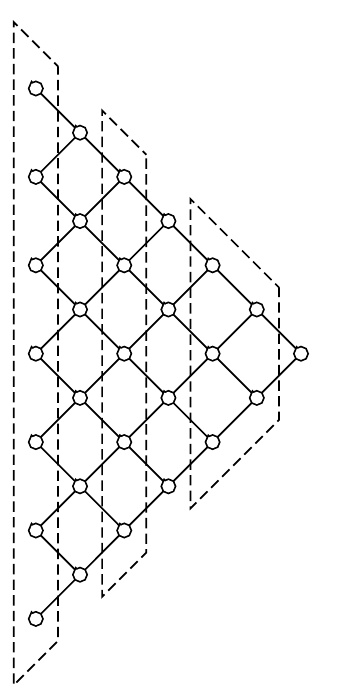}&
    \includegraphics[width=0.4\textwidth, trim = 0.5cm 19.7cm 17cm 3cm, clip]{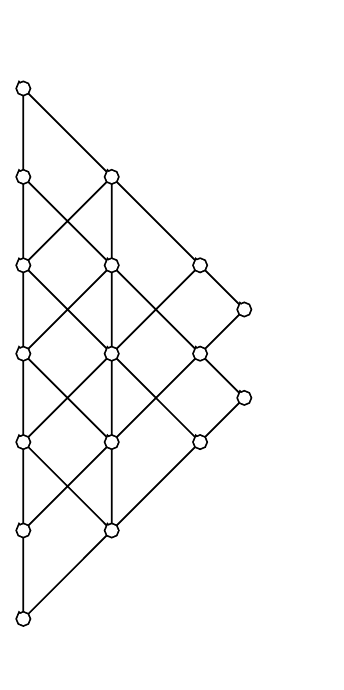}\\
    Column selection & The poset $P_{7,5,3,3,2}$
  \end{tabular}
  \caption{Construction of $P_\lambda$}
  \label{fig:construction}
\end{figure}
Given an ideal $I$ in $P$, $I\cap P_{\lambda}$ is an ideal in $P_{\lambda}$.
The following lemma is true for the intersection of ideals with an induced subposet of any poset:
\begin{lemma}
  \label{lemma:intersections}
  The map taking an ideal in $P$ to its intersection with $P_\lambda$ is a bijection between ideals in $P$ which are generated by elements of $P_\lambda$ and ideals in $P_\lambda$.
\end{lemma}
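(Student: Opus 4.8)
The plan is to produce an explicit two-sided inverse to the map $\iota\colon I\mapsto I\cap P_\lambda$. Since, as the paper remarks, nothing special about $P_\lambda$ is used, I would phrase everything for an arbitrary induced subposet, but I will write $P_\lambda\subseteq P$ to match the statement. For an ideal $J$ of $P_\lambda$ write $\langle J\rangle$ for the ideal of $P$ it generates, i.e.\ the set of all $x\in P$ with $x\leq y$ for some $y\in J$. The claim will follow once I check three things: (1) $\iota$ actually lands in the ideals of $P_\lambda$; (2) $\iota\circ\langle\,\cdot\,\rangle=\mathrm{id}$ on ideals of $P_\lambda$ (surjectivity of $\iota$ onto ideals of $P_\lambda$); (3) $\langle\,\cdot\,\rangle\circ\iota=\mathrm{id}$ on ideals of $P$ generated by elements of $P_\lambda$ (injectivity).

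For (1): if $I$ is any ideal of $P$ and $x\in I\cap P_\lambda$, then any $y\in P_\lambda$ with $y\leq x$ lies in $I$ by downward closure of $I$ in $P$, hence in $I\cap P_\lambda$; so $I\cap P_\lambda$ is a down-set of $P_\lambda$. (This step does not need the generation hypothesis.) For (2): given an ideal $J$ of $P_\lambda$, clearly $J\subseteq\langle J\rangle\cap P_\lambda$; conversely if $x\in\langle J\rangle\cap P_\lambda$ then $x\in P_\lambda$ and $x\leq y$ for some $y\in J$, so $x\in J$ because $J$ is a down-set \emph{within} $P_\lambda$. Hence $\langle J\rangle\cap P_\lambda=J$, and in particular every ideal of $P_\lambda$ is hit, by the ideal $\langle J\rangle$, which is generated by elements of $P_\lambda$.

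For (3): the key observation is that an ideal $I$ of $P$ generated by some set $S\subseteq P_\lambda$ is in fact generated by $I\cap P_\lambda$. Indeed $S\subseteq I\cap P_\lambda\subseteq I$, so $I=\langle S\rangle\subseteq\langle I\cap P_\lambda\rangle\subseteq I$, forcing $I=\langle I\cap P_\lambda\rangle$. Consequently, if $I_1,I_2$ are ideals of $P$ generated by elements of $P_\lambda$ with $I_1\cap P_\lambda=I_2\cap P_\lambda$, then $I_1=\langle I_1\cap P_\lambda\rangle=\langle I_2\cap P_\lambda\rangle=I_2$, so $\iota$ is injective on this class. Combining (1)--(3), $\iota$ and $\langle\,\cdot\,\rangle$ are mutually inverse bijections between the ideals of $P$ generated by elements of $P_\lambda$ and the ideals of $P_\lambda$.

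I do not expect a genuine obstacle here: the argument is elementary order theory. The only two points that merit care are the one used in step (3)---that the generating set of an ideal generated by elements of $P_\lambda$ may be enlarged to the full intersection $I\cap P_\lambda$ without changing the ideal---and, in step (2), the fact that one must invoke downward closure of $J$ \emph{inside} $P_\lambda$ (the induced order), not inside $P$, for the containment $\langle J\rangle\cap P_\lambda\subseteq J$.
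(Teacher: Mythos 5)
Your proposal is correct and follows essentially the same route as the paper: both exhibit the inverse map as ``take the ideal of $P$ generated by the given ideal of $P_\lambda$.'' The only cosmetic difference is that the paper generates from the antichain of maximal elements (invoking the antichain--ideal correspondence), whereas you generate from the whole ideal $J$ and verify the two compositions directly, which is marginally more self-contained.
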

\begin{proof}
  Recall that there is a one to one correspondence between antichains and ideals; the maximal elements in an ideal form an antichain that generates the ideal (see \cite[Section~3.1]{MR1442260}).
  Map an ideal in $P_\lambda$ to the ideal in $P$ generated by its maximal elements.
  This is an inverse to $I\mapsto I\cap P_\lambda$.
\end{proof}
For an element $a\in A_{p,\lambda}$ let $[a]$ denote its $G_{p,\lambda}$-orbit.
For each $a\in A_{p,\lambda}$ let $I_\lambda(a)=I(a)\cap P_\lambda$.
If $[a]=[b]$, then by Theorem~\ref{theorem:degeneracy}, $I_\lambda(a)=I_\lambda(b)$.
Conversely,
\begin{lemma}
  \label{lemma:antisym}
  If $I_\lambda(a)=I_\lambda(b)$ then $[a]=[b]$.
\end{lemma}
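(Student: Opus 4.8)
The plan is to produce, inside each orbit, one distinguished representative depending only on the associated ideal, and then to compare ideals. First I reduce the hypothesis: if $a_i = p^{r_i}u_i$ with $u_i$ invertible, then $I(a)$ is the ideal of $P$ generated by the orbits $(p^{r_i},\lambda_i)$ of the nonzero coordinates of $a$, and each such generator lies in $P_\lambda$. Hence $I(a)$ and $I(b)$ are both ideals of $P$ generated by elements of $P_\lambda$, and the hypothesis $I_\lambda(a)=I(a)\cap P_\lambda=I(b)\cap P_\lambda=I_\lambda(b)$ forces $I(a)=I(b)$ by the injectivity in Lemma~\ref{lemma:intersections}. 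By Theorem~\ref{theorem:degeneracy} it therefore suffices to show that $I(a)=I(b)$ implies $[a]=[b]$.

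Next I bring an element into a normal form. Collect the coordinates of $A_{p,\lambda}$ into homogeneous blocks, one for each value occurring in $\lambda$; the block attached to a value $k$ of multiplicity $m$ is a copy of $(\Z/p^k\Z)^m$, on which $\mathrm{GL}_m(\Z/p^k\Z)$ acts through block-diagonal automorphisms of $A_{p,\lambda}$. Elementary column operations carry a nonzero vector of $(\Z/p^k\Z)^m$ to $(p^r,0,\dots,0)$, where $r$ is the smallest $p$-adic valuation among its entries. Doing this in every block shows that $a$ is equivalent under its automorphism group to a \emph{monomial}: an element with at most one nonzero coordinate per block, each such coordinate a power of $p$. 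The nonzero coordinates of a monomial determine a set $S$ of orbits in $P_\lambda$, at most one at each level, and $I(a)$ is the ideal generated by $S$.

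I then trim $S$ down to the antichain $M$ of maximal elements of $I(a)$. If $S\neq M$, some $(p^{r},k)\in S$ fails to be maximal in $S$, hence lies below another $(p^{r'},k')\in S$; since distinct elements of $S$ sit at distinct levels, $k\neq k'$. By Lemma~\ref{lemma:fp} this relation amounts to $r'\leq r$ and $k'-r'\geq k-r$, which is exactly the condition guaranteeing a homomorphism $\phi\colon\Z/p^{k'}\Z\to\Z/p^k\Z$ with $\phi(p^{r'})=-p^r$. Let $N$ be the endomorphism of $A_{p,\lambda}$ sending the distinguished generator of block $k'$ into the distinguished coordinate of block $k$ via $\phi$, and killing all other generators. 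Then $N^2=0$, so $1+N$ is an automorphism, and applying it to the monomial changes only the level-$k$ coordinate, replacing it by $0$. Since $(p^r,k)$ is dominated in $S$ by an element that remains, the ideal generated by the support, hence its set $M$ of maximal elements, is unchanged; as the support strictly shrinks, finitely many such steps leave the support equal to $M$. The resulting monomial, namely $p^c$ placed in a fixed coordinate of level $n$ for each $(p^c,n)\in M$, depends only on $I(a)$.

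Consequently $a$ is equivalent to the canonical monomial of $I(a)$ and $b$ to that of $I(b)$; as $I(a)=I(b)$ these monomials coincide, so $[a]=[b]$. The crux is the trimming paragraph: one must check that $1+N$ really is an automorphism fixing every coordinate but one, and that the poset relation read off from Lemma~\ref{lemma:fp} is precisely the divisibility making $\phi$ exist; the verification that deleting a dominated support element preserves the ideal and its maximal antichain, so that the procedure terminates at $M$, is then routine.
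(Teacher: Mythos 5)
Your proof is correct and takes essentially the same approach as the paper: reduce $a$ to a canonical representative supported on the antichain $\max(I_\lambda(a))$ by explicit automorphisms (column operations within each homogeneous block, then unipotent shears $1+N$ across blocks to kill dominated entries), so that the representative depends only on the ideal. The paper's proof does exactly this, handling the coordinates individually rather than block by block.
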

\begin{proof}
  Define a function $e:P_\lambda\to A_{p,\lambda}$ as follows: take $x=(p^r,k)$ to the unique vector $e(x)\in A_{p,\lambda}$ all of whose entries $e(x)_i$ are $0$ except for the least $i$ such that $\lambda_i=k$, when $e(x)_i=p^r$.
  Let $I\subset P_\lambda$ be an ideal.
  Let $\max(I)$ denote the antichain of maximal elements of $I$.
  Let
  \begin{equation*}
    a(I)=\sum_{x\in \max(I)} e(x).
  \end{equation*}
  Now take $I=I_\lambda(a)$.
  That $[a]=[a(I)]$ is shown by reducing $a$ to $a(I)$ by a sequence of automorphisms.
  Firstly, after applying an automorphism to each coordinate of $a$, one may assume that it is a power of $p$.
  For each $i$ such that $a(I)_i\neq 0$, there exists $j$ such that $\lambda_i=\lambda_j$ and $a_j=a(I)_i$.
  Since $\lambda_i=\lambda_j$, interchanging the $i$th and $j$th coordinates is an automorphism of $A_{p,\lambda}$.
  Therefore, one may assume that for those $i$ such that $a(I)_i\neq 0$, $a(I)_i=a_i$.
  If $a(I)_i=0$, $y=(a_i,\lambda_i)\in P_\lambda$ lies in $I$, so that there exists a maximal element $x$ of $I$ such that $x\geq y$. 
  In other words, there exists $j\neq i$ and a homomorphism $\phi_{ij}:\Z/p^{\lambda_j}\Z\to \Z/p^{\lambda_i}\Z$ such that $\phi_{ij}(a(I)_j)=a_i$.
  The automorphism of $A_{p,\lambda}$ given by
  \begin{equation*}
    (b_1,\ldots,b_l)\mapsto (b_1,\ldots,b_i-\phi_{ij}(b_j),\ldots,b_l)
  \end{equation*}
  kills the $i$th entry of $a$ while leaving all the others unchanged.
  One may repeat this process until $a$ is reduced to $a(I)$, so that $[a]=[a(I)]$.

  Now if $I_\lambda(b)=I$ as well, we have $[a]=[a(I)]=[b]$.
\end{proof}
For $a,b\in A_{p,\lambda}$ say that $[a]\geq [b]$ if $a\to b$.
This is clearly a well-defined reflexive and transitive relation on $G_{p,\lambda}\backslash A_{p,\lambda}$.
Lemma~\ref{lemma:antisym}, together with Theorem~\ref{theorem:degeneracy} implies that it is also antisymmetric, and that if $J(P_\lambda)$ denotes the lattice of ideals in $P_\lambda$ then
\begin{theorem}
  \label{theorem:orbits}
  The relation \lq$\geq$\rq{} is a partial order on $G_{p,\lambda}\backslash A_{p,\lambda}$.
  The map $\phi:G_{p,\lambda}\backslash A_{p,\lambda}\to J(P_\lambda)$ given by $\phi([a])=I_{\lambda}(a)$ is an isomorphism of posets.
\end{theorem}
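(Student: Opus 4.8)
The plan is to deduce everything from Theorem~\ref{theorem:degeneracy} together with Lemmas~\ref{lemma:intersections} and~\ref{lemma:antisym}. The key observation is that for $a\in A_{p,\lambda}$ the ideal $I(a)$ of $P$ is always \emph{generated by elements of $P_\lambda$}, since the coordinates of $a$ lie in the groups $\Z/p^{\lambda_i}\Z$ and hence their orbits are of the form $(p^r,\lambda_i)\in P_\lambda$. By Lemma~\ref{lemma:intersections} the assignment $I(a)\mapsto I(a)\cap P_\lambda=I_\lambda(a)$ is then the restriction of a bijection between ideals of $P$ generated by elements of $P_\lambda$ and ideals of $P_\lambda$; this bijection and its inverse (the ideal of $P$ generated by $\max(I)$) obviously preserve inclusions. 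Combined with Theorem~\ref{theorem:degeneracy} this yields, for all $a,b\in A_{p,\lambda}$,
\begin{equation*}
  a\to b \iff I(a)\supset I(b) \iff I_\lambda(a)\supset I_\lambda(b).
\end{equation*}

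First I would verify that $\geq$ is a partial order on $G_{p,\lambda}\backslash A_{p,\lambda}$. Reflexivity and transitivity are immediate from the definition of degeneration (the identity map, and composition of homomorphisms). For antisymmetry, if $[a]\geq[b]$ and $[b]\geq[a]$ then the displayed equivalence gives $I_\lambda(a)\supset I_\lambda(b)$ and $I_\lambda(b)\supset I_\lambda(a)$, so $I_\lambda(a)=I_\lambda(b)$, and Lemma~\ref{lemma:antisym} forces $[a]=[b]$.

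Next I would check that $\phi$ is a well-defined order-isomorphism onto $J(P_\lambda)$. If $[a]=[b]$, some automorphism carries $a$ to $b$ and its inverse carries $b$ to $a$, so $a\to b$ and $b\to a$; by Theorem~\ref{theorem:degeneracy}, $I(a)=I(b)$, hence $I_\lambda(a)=I_\lambda(b)$, so $\phi$ is well defined. Injectivity is exactly Lemma~\ref{lemma:antisym}. For surjectivity, given an ideal $I$ of $P_\lambda$, let $a(I)=\sum_{x\in\max(I)}e(x)$ with $e$ the section $P_\lambda\to A_{p,\lambda}$ constructed in the proof of Lemma~\ref{lemma:antisym}: each $e(x)$ has a single non-zero coordinate whose orbit in the corresponding cyclic group is $x$, so $I(a(I))$ is the ideal of $P$ generated by $\max(I)$, and intersecting with $P_\lambda$ (Lemma~\ref{lemma:intersections}) gives $I_\lambda(a(I))=I$, i.e.\ $\phi([a(I)])=I$. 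Finally the displayed equivalence shows both $\phi$ and $\phi^{-1}$ are order-preserving, so $\phi$ is an isomorphism of posets.

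There is no genuine obstacle here beyond assembling the pieces correctly; the two points I would be careful to state explicitly are the observation that $I(a)$ is always generated inside $P_\lambda$ (which is what lets Lemma~\ref{lemma:intersections} mediate between $I(a)$ and $I_\lambda(a)$), and the surjectivity of $\phi$ — that every ideal of $P_\lambda$ is realized as some $I_\lambda(a)$ — which is the one clause not already addressed in the discussion preceding the theorem and which relies on the explicit section $e$ from the proof of Lemma~\ref{lemma:antisym}.
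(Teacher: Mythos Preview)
Your proposal is correct and follows essentially the same route as the paper: the paper's ``proof'' is the paragraph immediately preceding the theorem, which invokes Theorem~\ref{theorem:degeneracy} for well-definedness and order-preservation and Lemma~\ref{lemma:antisym} for antisymmetry and injectivity, leaving surjectivity implicit in the construction of $a(I)$ inside that lemma's proof. You have simply made explicit the role of Lemma~\ref{lemma:intersections} in passing between $I(a)$ and $I_\lambda(a)$, and spelled out the surjectivity argument; nothing differs in substance.
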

\section{Enumeration}
\label{sec:enumeration}
Let $I\subset P_\lambda$ be an ideal.
For each $i$ let $r_i$ be the smallest non-negative integer such that $(p^{r_i},\lambda_i)\in I$.
Clearly the ordered set $\mathbf r=\langle r_1,r_2,\ldots, r_l\rangle$ determines $I$. 
Therefore to count the number of ideals, we only need to count the number of ordered sets $\mathbf r$ such that
\begin{equation*}
  I(\mathbf r)=\{(p^{s_i},\lambda_i)|1\leq i\leq l,\;s_i\geq r_i\}
\end{equation*}
is an ideal in $P_\lambda$.
For any $i<l$, if $r_{i-1}>r_i+(\lambda_{i-1}-\lambda_i)$, then $I(\mathbf r)$ would not be an ideal, since it would contain $x=(p^{r_i},\lambda_i)$ but not $y=(p^{r_i+(\lambda_{i-1}-\lambda_i)},\lambda_{i-1})$, even though $x\geq y$.
Similarly, if $r_{i-1}<r_i$, $I(\mathbf r)$ would not be an ideal.
Therefore, if $I(\mathbf r)$ is an ideal, then
\begin{equation}
  \label{eq:3}
  r_i\leq r_{i-1}\leq r_i+(\lambda_{i-1}-\lambda_i) \text{ for } i=1,\ldots,l-1.
\end{equation}
Furthermore, when all the above inequalities are satisfied, then $I(\mathbf r)$ is an ideal.
Therefore one way of counting the number of ideals is as follows: $r_l$ (and hence the element $(p^{r_l},\lambda_l)$) can be chosen in $\lambda_l+1$ ways; once $r_{i+1},\ldots,r_l$ have been chosen, there are $(\lambda_{i-1}-\lambda_{i}+1)$ choices for $r_{i-1}$.
The total number of sequences $r_1,\ldots r_l$ satisfying (\ref{eq:3}) is therefore given by (\ref{eq:1}).

The piecewise linear curve in the Hasse diagram of $P_\lambda$ obtained by joining $(p^{r_i},\lambda_i)$ as $i$ increases from $1$ to the largest $i$ for which $r_i<\lambda_i$ can be regarded as the the boundary of the ideal $I$, and denoted by $\partial I$.
\begin{example}
  For $\lambda$, $a$ and $b$ as in Example~\ref{example:ideals}, the boundaries of $I_\lambda(a)$ and $I_\lambda(b)$ are the piecewise linear curves represented by solid lines in Figure~\ref{fig:boundaries}.
\end{example}
\begin{figure}
  \centering
  \begin{tabular}{cc}
    \includegraphics[width=0.3\textwidth, trim = 3.3cm 4.7cm 15cm 19cm, clip]{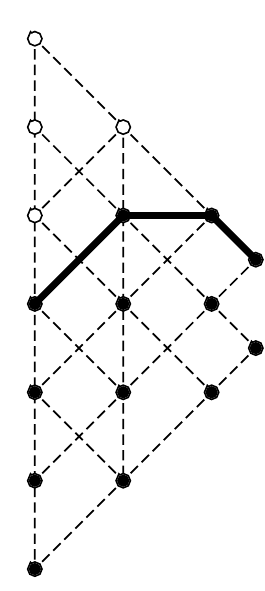}\hspace{0.5cm}&\hspace{0.5cm}
    \includegraphics[width=0.35\textwidth, trim = 0.58cm 19.9cm 17.1cm 3cm, clip]{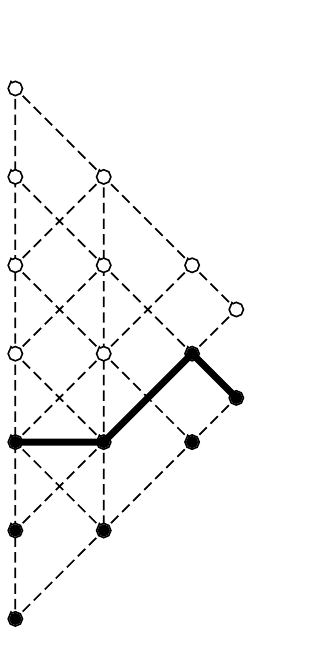}\\
    $\partial I_\lambda(a)$ & $\partial I_\lambda(b)$\\
  \end{tabular}
  \caption{Boundaries of ideals}
  \label{fig:boundaries}
\end{figure}
Another way to count ideals is by using the correspondence between antichains and ideals mentioned in the proof of Lemma~\ref{lemma:intersections}.
Any antichain in $P_\lambda$ can have at most one entry in each column.
As in Section~\ref{sec:background}, let $\tau_1<\cdots<\tau_t$ be the distinct natural numbers occurring in the partition $\lambda$.
To count the number of antichains with one entry in each of the columns representing orbits in $\Z/p^{\tau_{i_j}}\Z$ for some sequence $1\leq i_1<\cdots<i_k\leq t$, start with the column of orbits in $\Z/p^{\tau_{i_k}}\Z$, where there are $\tau_{i_k}$ choices.
The number of elements in the column corresponding to $\Z/p^{\tau_{i_{k-1}}}\Z$ which are not comparable with the previously chosen entry is precisely $\tau_{i_{k-1}}-\tau_{i_k}-1$.
Continuing thus, and then summing over all possible subsets $1\leq i_1<\cdots<i_k\leq m$ gives (\ref{eq:2}).
\begin{example}
  \label{eg:maximals}
  For $\lambda$, $a$ and $b$ as in Example~\ref{example:ideals}, the maximal elements of $I_\lambda(a)$ are $(p,5)$ and $(1,3)$, whereas for $I_\lambda(b)$ they are $(p^4,7)$ and $(p,3)$. The total number of orbits in $A_{p,\lambda}$ is readily seen to be $54$ by either method.
\end{example}
\section{Characteristic subgroups}
\label{sec:subq-char-subgr}
For each ideal $I\subset P_\lambda$ consider the subset
\begin{equation*}
  A_{p,I}=\{a\in A_{p,\lambda}: I_\lambda(a)\subset I\}
\end{equation*}
Clearly, $A_{p,I}$ is a characteristic subgroup of $A_{p,\lambda}$.
Moreover, $J\subset I$ if and only if $A_{p,J}\subset A_{p,I}$.
It follows that 
\begin{theorem}
  \label{theorem:orbits-to-subgroups}
  The function $[a]\mapsto A_{p,I_\lambda(a)}$ embeds the lattice of orbits in $A_{p,\lambda}$ into the lattice of characteristic subgroups of $A_{p,\lambda}$.
\end{theorem}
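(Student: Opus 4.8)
The plan is to establish the three facts stated just before the theorem — that each $A_{p,I}$ is a subgroup, that it is characteristic, and that $J\subseteq I$ if and only if $A_{p,J}\subseteq A_{p,I}$ — and then to compose with the poset isomorphism of Theorem~\ref{theorem:orbits}.

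First I would check that $A_{p,I}$ is a subgroup. It contains $0$ because $I_\lambda(0)$ is the empty ideal, and it is closed under negation because $-a_i=p^r(-u)$ lies in the same orbit as $a_i=p^ru$, so $I_\lambda(-a)=I_\lambda(a)$. For closure under addition, take $a,b\in A_{p,I}$ and a coordinate $i$ with $(a+b)_i\neq 0$; writing $a_i=p^ru$ and $b_i=p^sv$ with $u,v$ units and $r\le s$, we get $(a+b)_i=p^r(u+p^{s-r}v)$, so $(a+b)_i$ lies in an orbit $(p^t,\lambda_i)$ with $t\ge r$. By Lemma~\ref{lemma:fp}, applied inside $\Z/p^{\lambda_i}\Z$, this gives $((a+b)_i,\lambda_i)\le(a_i,\lambda_i)$ in $P_\lambda$; since $(a_i,\lambda_i)\in I_\lambda(a)\subseteq I$ and $I$ is a down-set, $((a+b)_i,\lambda_i)\in I$. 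Letting $i$ range over all coordinates shows that every generator of $I_\lambda(a+b)$ lies in $I$, hence $I_\lambda(a+b)\subseteq I$ and $a+b\in A_{p,I}$. That $A_{p,I}$ is characteristic is then immediate from Theorem~\ref{theorem:orbits}: if $g$ is an automorphism of $A_{p,\lambda}$ then $[ga]=[a]$, so $I_\lambda(ga)=I_\lambda(a)$, and $g$ maps $A_{p,I}$ onto itself.

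Next I would prove the order-theoretic equivalence. If $J\subseteq I$ and $a\in A_{p,J}$, then $I_\lambda(a)\subseteq J\subseteq I$, so $a\in A_{p,I}$; this is the easy direction. Conversely, suppose $A_{p,J}\subseteq A_{p,I}$. By the surjectivity of the map in Theorem~\ref{theorem:orbits} there is $a\in A_{p,\lambda}$ with $I_\lambda(a)=J$; then $a\in A_{p,J}\subseteq A_{p,I}$, so $J=I_\lambda(a)\subseteq I$. (One can also argue element by element using the map $e\colon P_\lambda\to A_{p,\lambda}$ from the proof of Lemma~\ref{lemma:antisym}: for $x\in J$ the element $e(x)$ satisfies $I_\lambda(e(x))=\{y\in P_\lambda:y\le x\}\subseteq J$, so $e(x)\in A_{p,J}\subseteq A_{p,I}$, whence $x\in I_\lambda(e(x))\subseteq I$.)

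Putting the pieces together: by Theorem~\ref{theorem:orbits} the map $[a]\mapsto I_\lambda(a)$ is a poset isomorphism onto $J(P_\lambda)$, and by the previous paragraph $I\mapsto A_{p,I}$ is an injective map of $J(P_\lambda)$ into the lattice of characteristic subgroups that preserves and reflects inclusion; their composite $[a]\mapsto A_{p,I_\lambda(a)}$ is therefore an order-embedding. If one wants the embedding to respect meets and joins, note that $A_{p,I\cap J}=A_{p,I}\cap A_{p,J}$ is immediate from the definition, and that $A_{p,I\cup J}=A_{p,I}+A_{p,J}$ follows by splitting the coordinates of an element of $A_{p,I\cup J}$ according to whether the corresponding orbit lies in $I$ or in $J$. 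The only point in all this that is not purely formal is the closure of $A_{p,I}$ under addition, which rests on the observation — already used in the proof of Theorem~\ref{theorem:degeneracy} — that in a cyclic $p$-group a sum always degenerates from a summand of least $p$-valuation.
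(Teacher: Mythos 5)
Your proof is correct and follows the same route as the paper, which simply asserts (without detail) that $A_{p,I}$ is a characteristic subgroup and that $J\subseteq I$ if and only if $A_{p,J}\subseteq A_{p,I}$, and then invokes Theorem~\ref{theorem:orbits}. Your write-up supplies the verifications the paper leaves implicit (notably closure under addition via the valuation of a sum in a cyclic $p$-group, and the converse inclusion via surjectivity of $[a]\mapsto I_\lambda(a)$), all of which check out.
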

\begin{remark}
  When $p\neq 2$, these sets have the same cardinality \cite[Section~10]{GarrettBirkhoff01011935}, so that $[a]\mapsto A_{p,I_\lambda(a)}$ is an isomorphism of posets.
\end{remark}
Given $x\in P_\lambda$, let $m(x)$ denote its multiplicity: if $x=(a,k)$ then $m(x)$ is the number of times that $k$ occurs in the partition $\lambda$.
For an ideal $I\subset P_\lambda$ let $[I]$ denote the number of points in it, counted with multiplicity:
\begin{equation*}
  [I]=\textstyle{\sum_{x\in I}} m(x)
\end{equation*}
For example, when $\lambda=(7,5,3,3,2)$, the elements in $P_\lambda$ of the form $(p^r,3)$ are counted twice. All other elements are counted once.
\begin{theorem}
  \label{theorem:order}
  For each ideal $I\subset P_\lambda$,
  $|A_{p,I}|=p^{[I]}$.
\end{theorem}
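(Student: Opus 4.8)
The plan is to compute $A_{p,I}$ explicitly as an (internal) direct sum of cyclic subgroups, one for each coordinate of $A_{p,\lambda}$, and then simply read off its order.

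First I would unwind the membership condition. Write $a=(a_1,\dots,a_l)$ and, for each $i$ with $a_i\neq 0$, write $a_i=p^{c_i}u_i$ with $u_i$ not divisible by $p$, so that the orbit of $a_i$ is the element $(p^{c_i},\lambda_i)$ of $P_\lambda$. Since $I_\lambda(a)=I(a)\cap P_\lambda$ and $I$ is an order ideal, the condition $I_\lambda(a)\subset I$ is equivalent to requiring that each generator $(p^{c_i},\lambda_i)$ of $I(a)$ already lie in $I$: these generators lie in $P_\lambda$, hence in $I_\lambda(a)$, which gives one implication, and conversely, if all the generators lie in the down-set $I$ then so does everything of $P_\lambda$ below them, i.e. all of $I_\lambda(a)$.

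Next I would bring in the integers $r_i$ from Section~\ref{sec:enumeration}: $r_i$ is the least non-negative integer with $(p^{r_i},\lambda_i)\in I$. By Lemma~\ref{lemma:fp}, the column of $P_\lambda$ over $\Z/p^{\lambda_i}\Z$ is a chain with $(p^{s},\lambda_i)\geq (p^{s'},\lambda_i)$ exactly when $s\leq s'$; hence $(p^{s},\lambda_i)\in I$ iff $s\geq r_i$. Combined with the previous step, $a\in A_{p,I}$ iff for every $i$ either $a_i=0$ or $c_i\geq r_i$, i.e. $a_i\in p^{r_i}\Z/p^{\lambda_i}\Z$ for all $i$. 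Therefore
\[
  A_{p,I}=\bigoplus_{i=1}^{l} p^{r_i}\Z/p^{\lambda_i}\Z,\qquad
  |A_{p,I}|=\prod_{i=1}^{l}p^{\lambda_i-r_i}=p^{\sum_{i=1}^{l}(\lambda_i-r_i)}.
\]

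Finally I would match this exponent with $[I]$. The part of $I$ lying in the column over $\Z/p^{\lambda_i}\Z$ is precisely $\{(p^{s},\lambda_i):r_i\leq s\leq \lambda_i-1\}$, which has $\lambda_i-r_i$ elements. Summing over the distinct values $\tau_1<\cdots<\tau_t$ occurring in $\lambda$, and noting that each point in the $\tau_j$-column contributes multiplicity $m(x)=\#\{i:\lambda_i=\tau_j\}$, gives $[I]=\sum_{x\in I}m(x)=\sum_{i=1}^{l}(\lambda_i-r_i)$, so $|A_{p,I}|=p^{[I]}$. The only point requiring any care is the bookkeeping with multiplicities when passing between the index set $\{1,\dots,l\}$ and the set of distinct column heights; there is no real obstacle.
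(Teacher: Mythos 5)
Your proof is correct and follows the same route as the paper: the paper's (very terse) proof consists exactly of the observation that membership in $A_{p,I}$ is the coordinatewise condition $[a_i]\in I$ for each $i$, leaving the column-by-column count $\prod_i p^{\lambda_i-r_i}=p^{[I]}$ implicit. You have simply written out in full the details the paper leaves to the reader.
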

\begin{proof}
  Note that
  \begin{equation*}
    A_{p,I}=\{(a_1,\ldots,a_l)\in A_{p,\lambda}| [a_i]\in I \text{ for each } i\},
  \end{equation*}
  from which the result follows.
\end{proof}
\begin{example}
  \label{eg:char-subgroups}
  For $\lambda$, $a$ and $b$ as in Example~\ref{example:ideals}, that $|A_{p,I_\lambda(a)}|=p^{16}$ and $|A_{p,I_\lambda(b)}|=p^{10}$ is evident from Theorem~\ref{theorem:order} and Figure~\ref{fig:boundaries}.
\end{example}
\section{Orders of orbits}
For each ideal $I$ in $P_\lambda$, let $O_{p,I}$ denote the orbit in $A_{p,\lambda}$ corresponding to $I$ under the bijection of Theorem~\ref{theorem:orbits}.
Thus
\begin{equation}
  \label{eq:4}
  O_{p,I}=\{a\in A_{p,\lambda}|I_\lambda(a)=I\}.
\end{equation}
\label{sec:orders-orbits}
\begin{theorem}
  \label{theorem:polynomial}
  For each ideal $I$ of $P_\lambda$, the order of $O_{p,I}$ is a monic polynomial in $p$ of degree $[I]$ with integer coefficients.
\end{theorem}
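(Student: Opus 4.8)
The plan is to turn the identity $|A_{p,I}|=p^{[I]}$ from Theorem~\ref{theorem:order} into a formula for $|O_{p,I}|$ by Möbius inversion over the lattice $J(P_\lambda)$ of ideals in $P_\lambda$. First I would observe that, for a fixed ideal $I$, an element $a\in A_{p,\lambda}$ lies in $A_{p,I}$ precisely when $I_\lambda(a)$ is an ideal of $P_\lambda$ contained in $I$, and that each such $a$ lies in exactly one orbit, namely $O_{p,J}$ with $J=I_\lambda(a)$ (using Theorem~\ref{theorem:orbits}). Hence $A_{p,I}$ is the disjoint union of the orbits $O_{p,J}$ over the ideals $J\subset I$, and so
\[
  p^{[I]}=|A_{p,I}|=\sum_{J\subset I}|O_{p,J}|,
\]
the sum running over ideals $J$ of $P_\lambda$ with $J\subset I$.

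Next I would apply the Möbius inversion formula in the finite poset $J(P_\lambda)$ to this triangular system of equations, obtaining
\[
  |O_{p,I}|=\sum_{J\subset I}\mu(J,I)\,p^{[J]},
\]
where $\mu$ denotes the Möbius function of $J(P_\lambda)$. Since the values $\mu(J,I)$ are integers, this immediately exhibits $|O_{p,I}|$ as a polynomial in $p$ with integer coefficients.

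It remains to pin down the degree and the leading coefficient. Here I would use that every $x\in P_\lambda$ has multiplicity $m(x)\geq 1$, so that removing even a single element from an ideal strictly decreases $[\,\cdot\,]$; thus $[J]\leq[I]-1$ whenever $J\subsetneq I$. Consequently the only term of the displayed sum of degree $[I]$ is the one with $J=I$, whose coefficient is $\mu(I,I)=1$, while every other term has strictly smaller degree. Therefore $|O_{p,I}|$ is monic of degree exactly $[I]$, as claimed.

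I do not expect a serious obstacle: the argument is a clean application of Möbius inversion once the partition of $A_{p,I}$ into orbits indexed by sub-ideals is in place. The only point requiring a moment's care is the strict inequality $[J]<[I]$ for proper sub-ideals, which hinges on the positivity of the multiplicities $m(x)$; this is what guarantees monicity and the exact degree rather than just ``degree at most $[I]$''. (Theorems~\ref{theorem:orbits-order} and~\ref{theorem:visual-order} will later compute this polynomial explicitly, but are not needed for the present statement.)
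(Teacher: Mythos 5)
Your proof is correct and follows essentially the same route as the paper: decompose $A_{p,I}$ into the orbits $O_{p,J}$ for ideals $J\subset I$, apply M\"obius inversion in $J(P_\lambda)$ to get $|O_{p,I}|=\sum_{J\subset I}\mu(J,I)p^{[J]}$, and read off integrality and monicity from $\mu(I,I)=1$. Your explicit remark that $[J]<[I]$ for proper subideals (because all multiplicities $m(x)$ are positive) is a welcome bit of extra care that the paper's terser proof leaves implicit, and your displayed inversion formula corrects an evident typo in the paper's version, which writes $p^{[I]}$ where $p^{[J]}$ is meant.
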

\begin{proof}
  Since
  \begin{equation*}
    p^{[I]}=|A_{p,I}|=\sum_{J\subset I}|O_{p,J}|,
  \end{equation*}
  the M\"obius inversion formula \cite[Section~3.7]{MR1442260} gives
  \begin{equation*}
    |O_{p,I}|=\sum_{J\subset I}\mu(J,I)p^{[I]}
  \end{equation*}
  where $\mu$ denotes the M\"obius function of the lattice of ideals in $P_\lambda$. Since $\mu$ is integer-valued with $\mu(I,I)=1$ for each $I$, the result follows.
\end{proof}
Recall that the M\"obius function of any finite distributive lattice can be computed explicitly \cite[Example~3.9.6]{MR1442260}:
\begin{lemma}
  \label{lemma:Moebius}
  For ideals $J\subset I$ in $P_\lambda$,
  \begin{equation*}
    \mu(J,I)=
    \begin{cases}
      (-1)^{|I\setminus J|} & \text{ if } I\setminus J\text{ is an antichain}\\
      0 & \text{ otherwise.}
    \end{cases}
  \end{equation*}
\end{lemma}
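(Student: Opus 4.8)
The plan is to derive this from the general description of the M\"obius function of a lattice of order ideals; since the displayed formula is precisely that of \cite[Example~3.9.6]{MR1442260}, one could cite it outright, but a short self-contained argument runs as follows.

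First I would reduce to the case $J=\varnothing$, $I=P_\lambda$ by passing to the interval. For ideals $J\subset I$ in $P_\lambda$, the interval $[J,I]$ in the lattice of ideals is isomorphic to the lattice $J(R)$ of order ideals of the induced subposet $R$ of $P_\lambda$ carried by $I\setminus J$: the map $K\mapsto K\setminus J$ is an inclusion-preserving bijection from $\{K:J\subset K\subset I\}$ onto $J(R)$, with inverse $L\mapsto J\cup L$. Checking that $K\setminus J$ is an ideal of $R$ and that $J\cup L$ is an ideal of $P_\lambda$ lying between $J$ and $I$ is a routine verification using that $J$, $K$ and $L$ are down-closed. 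Granting this, $\mu(J,I)=\mu_{J(R)}(\hat 0,\hat 1)$, and it remains to show that for any finite poset $R$ one has $\mu_{J(R)}(\hat 0,\hat 1)=(-1)^{|R|}$ if $R$ is an antichain and $0$ otherwise; the case $R=\varnothing$ (i.e.\ $J=I$) being trivial, assume $R\neq\varnothing$.

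For the remaining assertion I would apply Rota's Crosscut Theorem \cite[\S3.9]{MR1442260} to the finite lattice $J(R)$, with the set of atoms as crosscut. The atoms of $J(R)$ are exactly the singletons $\{m\}$ with $m$ minimal in $R$, since every nonempty order ideal of $R$ contains a minimal element $m$ and hence contains $\{m\}$. Since joins in $J(R)$ are unions, a family of atoms $\{m_1\},\dots,\{m_k\}$ has join $\{m_1,\dots,m_k\}$, which equals $\hat 1=R$ exactly when $\{m_1,\dots,m_k\}=R$, i.e.\ when every element of $R$ is minimal, i.e.\ when $R$ is an antichain; and in that case the only such family is that of all $|R|$ atoms. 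So the Crosscut Theorem gives $\mu_{J(R)}(\hat 0,\hat 1)=0$ when $R$ is not an antichain (the sum is empty) and $(-1)^{|R|}$ when it is, which finishes the argument.

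I do not foresee a real obstacle here: the only points needing attention are the two well-definedness checks for the interval isomorphism and the identification of atoms and joins in $J(R)$, all routine. If one prefers to avoid the Crosscut Theorem, an induction on $|R|$ works as well --- the decomposition $J(R_1\sqcup R_2)\cong J(R_1)\times J(R_2)$ together with the product rule for M\"obius functions reduces to connected $R$, where $\mu_{J(R)}(\hat 0,\hat 1)$ is $-1$ for $|R|=1$ and $0$ for $|R|\ge 2$ --- but the crosscut argument disposes of all cases at once.
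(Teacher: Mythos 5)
Your proposal is correct. The paper itself offers no argument for this lemma: it simply records the formula as the known computation of the M\"obius function of a finite distributive lattice, citing \cite[Example~3.9.6]{MR1442260} --- exactly the option you mention in your first sentence. What you add is a genuine self-contained derivation, and both of its steps check out: the interval $[J,I]$ in $J(P_\lambda)$ is indeed isomorphic to $J(I\setminus J)$ via $K\mapsto K\setminus J$ (your two down-closedness verifications are exactly what is needed), and the atoms of $J(R)$ are the singletons of minimal elements, whose joins are unions, so the crosscut/atom corollary of Rota's theorem yields an empty sum unless $R$ is an antichain, in which case the unique contributing family is all $|R|$ atoms. The only points worth being careful about, which you already handle, are excluding $R=\varnothing$ before invoking the crosscut corollary (it requires $\hat 0\neq\hat 1$) and noting that the M\"obius function of an interval depends only on its isomorphism type as a poset. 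Compared with the paper, your route costs a little space but makes the lemma independent of the reference; the alternative induction you sketch via $J(R_1\sqcup R_2)\cong J(R_1)\times J(R_2)$ would also work but needs the separate claim that $\mu_{J(R)}(\hat 0,\hat 1)=0$ for connected $R$ with $|R|\geq 2$, which the crosscut argument gives you for free.
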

Note that $I\setminus J$ is an antichain if and only if $I\setminus J$ is contained in the set $\max I$ of maximal elements of $I$.
We get an expression for the cardinality of an orbit:
\begin{theorem}
  \label{theorem:orbits-order}
  For every ideal $I$ of $P_\lambda$,
  \begin{equation*}
    |O_{p,I}|=\sum_{I\setminus \max I\subset J\subset I}(-1)^{|I\setminus J|}p^{[J]}.
  \end{equation*}
\end{theorem}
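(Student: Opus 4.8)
The plan is to feed the explicit Möbius function supplied by Lemma~\ref{lemma:Moebius} into the Möbius inversion already carried out in the proof of Theorem~\ref{theorem:polynomial}. That proof starts from the identity $p^{[I]}=|A_{p,I}|=\sum_{J\subset I}|O_{p,J}|$ in the lattice $J(P_\lambda)$ of ideals (Theorems~\ref{theorem:orbits} and~\ref{theorem:order}) and, by Möbius inversion over $J(P_\lambda)$, yields
\begin{equation*}
  |O_{p,I}|=\sum_{J\subset I}\mu(J,I)\,p^{[J]},
\end{equation*}
where $\mu$ is the Möbius function of $J(P_\lambda)$. So the entire content of the theorem is to rewrite this sum over the range stated.

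By Lemma~\ref{lemma:Moebius}, $\mu(J,I)$ vanishes unless $I\setminus J$ is an antichain, in which case it equals $(-1)^{|I\setminus J|}$; hence the only surviving terms are the ideals $J\subset I$ with $I\setminus J$ an antichain. The key step is therefore the observation recorded just after Lemma~\ref{lemma:Moebius}: for an ideal $J\subset I$, the set $I\setminus J$ is an antichain if and only if $I\setminus J$ is contained in $\max I$, equivalently $I\setminus\max I\subset J\subset I$. One direction is immediate, since any subset of the antichain $\max I$ is an antichain. For the converse, if some $x\in I\setminus J$ were not maximal in $I$, choose $y\in I$ with $y>x$; since $J$ is a down-set and $x\notin J$, also $y\notin J$, so $x$ and $y$ are comparable elements of $I\setminus J$, contradicting the antichain hypothesis.

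Finally I would observe that, conversely, every $J$ with $I\setminus\max I\subset J\subset I$ is automatically an ideal: if $z\leq w\in J$ then $z\in I$, and either $z$ is maximal in $I$, forcing $z=w\in J$, or else $z\in I\setminus\max I\subset J$. Thus the ideals $J$ occurring with nonzero coefficient are exactly those in the interval from $I\setminus\max I$ to $I$, and substituting $\mu(J,I)=(-1)^{|I\setminus J|}$ into the displayed sum gives the claimed formula. I do not expect any real obstacle beyond the little poset lemma relating antichains to complements of ideals inside an ideal; the rest is bookkeeping layered on the already established Theorem~\ref{theorem:polynomial} and Lemma~\ref{lemma:Moebius}.
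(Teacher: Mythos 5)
Your proposal is correct and follows the paper's own route exactly: Möbius inversion as in the proof of Theorem~\ref{theorem:polynomial} (with the exponent correctly read as $[J]$), combined with Lemma~\ref{lemma:Moebius} and the observation that $I\setminus J$ is an antichain precisely when $I\setminus\max I\subset J\subset I$. You merely spell out the two small poset facts the paper leaves implicit, and both verifications are sound.
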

\begin{example}
  \label{eg:orbits-order}
  Let $\lambda$, $a$ and $b$ be as in Example~\ref{example:ideals}.
  Then 
  \begin{gather*}
    |O_{p,I(a)}|=p^{16}-p^{15}-p^{14}+p^{13}\\
    |O_{p,I(b)}|=p^{10}-p^9-p^8+p^7.
  \end{gather*}
\end{example}
The expression (\ref{eq:4}) for $O_{p,I}$ suggests a visual method for computing $|O_{p,I}|$.
Since $I_\lambda(a)$ is the ideal generated by the orbits of the coordinates of $a$ in their respective cyclic groups, $I_\lambda(a)=I$ if and only if each coordinate of $a$ has orbit in $I$ and for each maximal element $x$ of $I$ there is at least one coordinate of $a$ whose orbit is $x$.
Therefore
\begin{theorem}
  \label{theorem:visual-order}
  For every ideal $I\subset P_\lambda$,
  \begin{equation*}
    |O_{p,I}|=p^{[I]}\prod_{x\in \max I}\big(1-p^{-m(x)}\big)
  \end{equation*}
\end{theorem}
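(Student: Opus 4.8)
The plan is to count the set $O_{p,I}=\{a\in A_{p,\lambda}:I_\lambda(a)=I\}$ directly, organizing the count column by column in the Hasse diagram of $P_\lambda$. Recall from the discussion preceding the theorem that $I_\lambda(a)=I$ precisely when (i) the orbit of every coordinate of $a$ lies in $I$, and (ii) for every maximal element $x$ of $I$ some coordinate of $a$ has orbit exactly $x$. I would first group the coordinates of $a=(a_1,\dots,a_l)$ according to the distinct part $k$ to which they belong, writing $m_k$ for the multiplicity $m(x)$ shared by all points $x$ in the $k$-th column of $P_\lambda$, and $r_k$ for the least $r$ with $(p^r,k)\in I$ (set $r_k=k$ if the $k$-th column misses $I$ entirely). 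The key structural point, already implicit in Section~\ref{sec:fundamental-poset}, is that each column of $P_\lambda$ is a chain, so $I$ meets the $k$-th column in the interval $\{(p^r,k):r_k\le r<k\}$ and $\max I$ contains at most one element of that column, namely its top $(p^{r_k},k)$ and only when the latter happens to be maximal in $I$.

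Conditions (i) and (ii) then decouple over columns. Condition (i) says each coordinate attached to the part $k$ must be a multiple of $p^{r_k}$ in $\Z/p^k\Z$, which leaves $p^{k-r_k}$ choices for it. Condition (ii) only constrains the $m_k$ coordinates of a column whose top $(p^{r_k},k)$ is maximal in $I$, and there it demands that at least one of them have $p$-valuation exactly $r_k$. So a column whose top is not maximal (in particular an empty column) contributes a factor $\big(p^{k-r_k}\big)^{m_k}$, while a column whose top is maximal contributes, by inclusion--exclusion on the bad event that all $m_k$ coordinates are divisible by $p^{r_k+1}$, the factor $\big(p^{k-r_k}\big)^{m_k}-\big(p^{k-r_k-1}\big)^{m_k}=p^{m_k(k-r_k)}\big(1-p^{-m_k}\big)$. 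Here I should check that imposing no ``hit'' requirement on the non-maximal columns is still consistent with $I_\lambda(a)=I$: every non-maximal point of $I$ lies below some maximal point (possibly in another column), and that maximal point is forced to be hit by (ii), so the ideal generated by the orbits of the coordinates recaptures all of $I$ and not just a proper ideal of it.

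Multiplying these factors over all distinct parts $k$ gives $|O_{p,I}|=p^{\sum_k m_k(k-r_k)}\prod_k\big(1-p^{-m_k}\big)$, the product being over those $k$ whose column contributes a maximal element of $I$. Finally I would identify the exponent and the product with the quantities in the statement: partitioning $I$ by columns shows $[I]=\sum_{x\in I}m(x)=\sum_k m_k\,|I\cap(\text{column }k)|=\sum_k m_k(k-r_k)$, while the maximal elements of $I$ biject with the columns whose top lies in $\max I$, with $m(x)=m_k$ for $x$ in column $k$, so $\prod_k(1-p^{-m_k})=\prod_{x\in\max I}(1-p^{-m(x)})$. I do not expect a genuine obstacle here: the argument is essentially bookkeeping, and the only place that needs a moment's care is the decoupling in the second paragraph --- verifying that the per-column conditions are jointly equivalent to $I_\lambda(a)=I$, not merely necessary for it.
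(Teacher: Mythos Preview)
Your proof is correct and follows exactly the approach the paper sketches in the paragraph immediately preceding the theorem: translate $I_\lambda(a)=I$ into the two conditions (i) each coordinate's orbit lies in $I$ and (ii) each maximal element of $I$ is the orbit of some coordinate, then count column by column. You have simply spelled out the bookkeeping (the decoupling, the per-column factors, and the identification of the exponent with $[I]$) that the paper leaves implicit.
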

\begin{example}
  \label{eg:visual-order}
  Let $\lambda$, $a$ and $b$ be as in Example~\ref{example:ideals}.
  Using the data from Examples~\ref{eg:maximals} and~\ref{eg:char-subgroups},
  \begin{gather*}
    |O_{p,I(a)}|=p^{16}(1-p^{-1})(1-p^{-2})\\
    |O_{p,I(b)}|=p^{10}(1-p^{-1})(1-p^{-2})
  \end{gather*}
  consistent with Example~\ref{eg:orbits-order}.
\end{example}
\section{Orbits on subquotients of characteristic subgroups}
\label{sec:orbits-subq-char}
If $S\subset S'\subset A_{p,\lambda}$ are characteristic subgroups, then the action of $G_{p,\lambda}$ on $A_{p,\lambda}$ descends to an action on the subquotient $S'/S$ of $A_{p,\lambda}$.
\begin{theorem}
  \label{theorem:subquotient}
  Let $J\subset I\subset P_\lambda$ be ideals.
  Then there is a canonical bijective correspondence between the ideals in $P_\lambda$ which are contained in $I$ and contain $J$ (equivalently, ideals in the induced subposet $I\setminus J$) and the $G_{p,\lambda}$-orbits in $A_{p,I}/A_{p,J}$.
\end{theorem}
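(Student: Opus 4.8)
The plan is to realise the correspondence explicitly. For $a\in A_{p,I}$ write $\bar a$ for its image in $A_{p,I}/A_{p,J}$, and set $\Psi([\bar a])=I_\lambda(a)\cup J$, the ideal of $P_\lambda$ generated by $I_\lambda(a)$ together with $J$ (a union of two down-sets is again a down-set, so this is an ideal). First I would dispose of the parenthetical remark: $K\mapsto K\setminus J$ is a bijection from the ideals $K$ of $P_\lambda$ with $J\subset K\subset I$ onto the order ideals of the induced subposet $I\setminus J$, with inverse $L\mapsto L\cup J$; this is immediate from the definition of an order ideal.

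Next I would check that $\Psi$ is well defined. Since $A_{p,I}$ and $A_{p,J}$ are characteristic, every $g\in G_{p,\lambda}$ acts on the subquotient, and by Theorem~\ref{theorem:orbits} the ideal $I_\lambda$ is constant on $G_{p,\lambda}$-orbits in $A_{p,\lambda}$, so $\Psi$ does not depend on the chosen lift up to its $G_{p,\lambda}$-orbit. It remains to see that $I_\lambda(a+c)\cup J=I_\lambda(a)\cup J$ for $c\in A_{p,J}$: using that in a cyclic group a sum is a degeneration of one of its summands (the observation in the proof of Theorem~\ref{theorem:degeneracy}), one gets $I_\lambda(a+c)\subset I_\lambda(a)\cup I_\lambda(c)\subset I_\lambda(a)\cup J$, and the reverse inclusion after joining with $J$ follows by writing $a=(a+c)+(-c)$. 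Finally $J\subset I_\lambda(a)\cup J\subset I$ because $a\in A_{p,I}$. So $\Psi$ is a well-defined map from $G_{p,\lambda}\backslash(A_{p,I}/A_{p,J})$ to $\{K:\,J\subset K\subset I\}$.

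For surjectivity I would use the canonical vector $a(K)=\sum_{x\in\max K}e(x)$ from Lemma~\ref{lemma:antisym}; as in that proof $I_\lambda(a(K))=K$, and since $K\subset I$ we have $a(K)\in A_{p,I}$, whence $\Psi([\,\overline{a(K)}\,])=K\cup J=K$. For injectivity, suppose $\Psi([\bar a])=\Psi([\bar b])=K$. Lemma~\ref{lemma:antisym} provides a sequence of automorphisms of $A_{p,\lambda}$ carrying $a$ to $a(I_\lambda(a))$; these descend to the subquotient, so $[\bar a]=[\,\overline{a(I_\lambda(a))}\,]$ there. I would then prove the identity $a(I_\lambda(a))\equiv a(K)\pmod{A_{p,J}}$, where $K=I_\lambda(a)\cup J$: partitioning $\max I_\lambda(a)$ and $\max K$ according to membership in $J$, a short argument (using that $I_\lambda(a)$ and $J$ both lie below $K$) shows that the two parts lying outside $J$ coincide, so $a(I_\lambda(a))-a(K)$ is a sum of vectors $e(x)$ with $x\in J$, each of which lies in $A_{p,J}$. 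Hence $[\bar a]=[\,\overline{a(K)}\,]=[\bar b]$, so $\Psi$ is injective, and it is the desired bijection, with inverse $K\mapsto[\,\overline{a(K)}\,]$.

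The routine ingredients are the lattice bijection $K\mapsto K\setminus J$ and the unwinding of definitions; the substance is in the injectivity step, and its one delicate point is the congruence $a(I_\lambda(a))\equiv a(K)\pmod{A_{p,J}}$ --- that is, tracking precisely how the maximal elements of $I_\lambda(a)$, of $J$, and of their union interact. Everything else reduces to Theorems~\ref{theorem:degeneracy} and~\ref{theorem:orbits}, Lemma~\ref{lemma:antisym}, and the fact that $A_{p,I}$ and $A_{p,J}$ are characteristic.
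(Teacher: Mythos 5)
Your proof is correct, and it is essentially the paper's argument run in the opposite direction. The paper pushes the orbits $O_{p,I'}$ forward along the quotient map and proves surjectivity onto the orbits indexed by ideals containing $J$ via the congruence $a(I'\cup J)\equiv a(I')\pmod{A_{p,J}}$ (obtained, exactly as in your injectivity step, by comparing $\max(I'\cup J)$ with $\max I'$ and noting that the discrepancy lies in $J$), and then proves injectivity by a separate containment trick: if $I_1\not\subset I_2$ then $O_{p,I_1}\not\subset A_{p,I_2}$, and the preimage of $A_{p,I_2}/A_{p,J}$ is $A_{p,I_2}$, so the two images cannot coincide. You instead construct the inverse map $[\bar a]\mapsto I_\lambda(a)\cup J$ explicitly; this forces you to verify well-definedness under translation by $A_{p,J}$ (your inclusion $I_\lambda(a+c)\subset I_\lambda(a)\cup I_\lambda(c)$, which the paper never needs to state), makes surjectivity trivial, and shifts the congruence $a(I_\lambda(a))\equiv a(I_\lambda(a)\cup J)\pmod{A_{p,J}}$ into the injectivity half. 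Your identity $\max K\setminus J=\max I_\lambda(a)\setminus J$ for $K=I_\lambda(a)\cup J$ is correct and is the precise form of the paper's terser assertion $\max(I'\cup J)\setminus\max I'\subset J$. The net content is the same; your version has the mild advantage of exhibiting both directions of the bijection explicitly (convenient for Corollary~\ref{cor:orbit-order}), at the cost of the extra well-definedness check.
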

\begin{proof}
  The map $a\mapsto a+A_{p,J}$ induces a surjective function $\phi$ from the set of $G_{p,\lambda}$-orbits in $A_{p,I}$ to the set of $G_{p,\lambda}$-orbits in $A_{p,I}/A_{p,J}$.

  Suppose $I'\subset I$ is an ideal in $P_\lambda$.
  Since $\max(I'\cup J)\subset \max I'\cup \max J$, $\max(I'\cup J)\setminus \max I'\subset J$.
  It follows that (in the notation of the proof of Lemma~\ref{lemma:antisym}) $a(I'\cup J)-a(I')\subset A_{p,J}$.
  Therefore, $\phi(O_{p,I'})=\phi(O_{p,I'\cup J})$.
  Thus $\phi$ is surjective when restricted to orbits corresponding to ideals containing $J$.

  Now suppose $I_1$ and $I_2$ are distinct ideals of $P_\lambda$ which are contained in $I$ and contain $J$.
  If $I_1$ is not contained in $I_2$, then $O_{p,I_1}$ is not contained in $A_{p,I_2}$.
  Therefore, $\phi(O_{p,I_1})$ can not equal $\phi(O_{p,I_2})$, which is contained in $A_{p,I_2}/A_{p,J}$.
  Similarly, if $I_2$ is not contained in $I_1$, then $\phi(O_{p,I_1})\neq \phi(O_{p,I_2})$.

  Thus $\phi$, when restricted to orbits corresponding to ideals containing $J$, gives rise to the required bijection.
\end{proof}
\begin{example}
  For $\lambda$, $a$ and $b$ as in Example~\ref{example:ideals}, the number of $G_{p,\lambda}$-orbits in $A_{p,I(a)}/A_{p,I(b)}$ is $13$, since the poset $I(a)\setminus I(b)$ has Hasse diagram as in Figure~\ref{fig:subquot}.
\end{example}
\begin{figure}
  \centering
  \begin{tabular}{cc}
    \includegraphics[width=0.4\textwidth, trim = 0.5cm 19.7cm 17cm 3cm, clip]{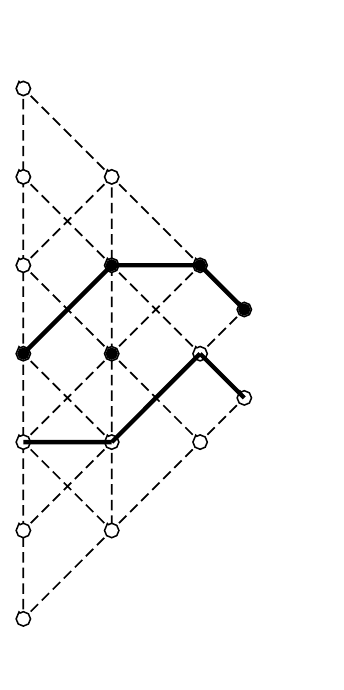}
  &\hspace{1cm}
  \includegraphics[width=0.4\textwidth, trim = 0.5cm 19.7cm 17cm 3cm, clip]{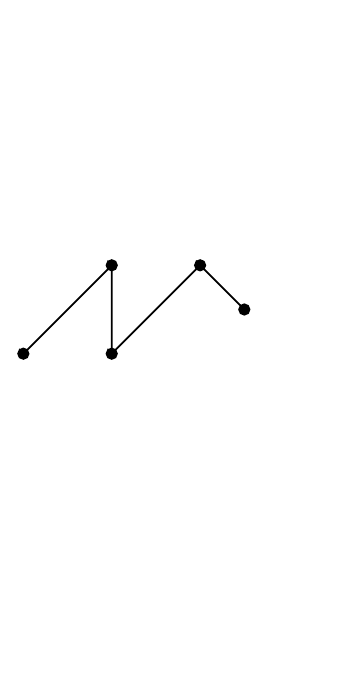}\\
  Inside $P_{7,5,3,3,2}$&\hspace{.7cm} By itself
\end{tabular}
  \caption{Hasse diagram of $I(a)\setminus I(b)$}
  \label{fig:subquot}
\end{figure}
\begin{cor}
  \label{cor:orbit-order}
  Let $J\subset I'\subset I$ be ideals in $P_\lambda$.
  Then the order of the $G_{p,\lambda}$-orbit in $A_{p,I}/A_{p,J}$ corresponding to $I'$ under the correspondence of Theorem~\ref{theorem:subquotient} is
  \begin{equation*}
    \frac{\textstyle{\sum_{I^{\prime\prime}\cup J=I'}}|O_{p,I^{\prime\prime}}|}{|A_{p,J}|}.
  \end{equation*}
\end{cor}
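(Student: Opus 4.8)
The goal is to compute the cardinality of the orbit in $A_{p,I}/A_{p,J}$ indexed by $I'$, where $J\subset I'\subset I$. The strategy is to use the surjection $\phi$ from the proof of Theorem~\ref{theorem:subquotient} to partition the preimage of this orbit into full $G_{p,\lambda}$-orbits in $A_{p,I}$, count each of those, and then account for the fibers of $\phi$. First I would identify exactly which orbits in $A_{p,I}$ map onto the orbit labelled $I'$. From the proof of Theorem~\ref{theorem:subquotient}, the orbit $O_{p,I''}$ in $A_{p,I}$ satisfies $\phi(O_{p,I''})=\phi(O_{p,I''\cup J})$, and the orbit in the subquotient corresponding to $I'$ is $\phi(O_{p,I'})$. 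So the orbits in $A_{p,I}$ lying over it are precisely those $O_{p,I''}$ with $I''\cup J=I'$; these are disjoint subsets of $A_{p,I}$ whose union is the full preimage $\phi^{-1}\big(\phi(O_{p,I'})\big)$.

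The second step is to observe that $\phi$ restricted to this preimage is a covering of the orbit in $A_{p,I}/A_{p,J}$ with constant fiber size $|A_{p,J}|$: indeed $\phi$ is just the quotient map $a\mapsto a+A_{p,J}$ restricted to $A_{p,I}$, so every fiber is a coset of $A_{p,J}$ and hence has exactly $|A_{p,J}|$ elements (using $A_{p,J}\subset A_{p,I}$). Combining the two observations,
\begin{equation*}
  |A_{p,J}|\cdot\big|\text{orbit corresponding to }I'\big| \;=\; \Big|\phi^{-1}\big(\phi(O_{p,I'})\big)\Big| \;=\; \sum_{I''\cup J=I'}|O_{p,I''}|,
\end{equation*}
which rearranges to the claimed formula.

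The only point requiring a little care — and the step I would flag as the main obstacle, though it is minor — is verifying that the preimage $\phi^{-1}\big(\phi(O_{p,I'})\big)$ is exactly $\bigcup_{I''\cup J=I'}O_{p,I''}$, i.e. that no orbit $O_{p,I''}$ with $I''\cup J\neq I'$ meets this preimage. This follows from the injectivity argument already given in the proof of Theorem~\ref{theorem:subquotient}: if $I''$ is any ideal with $J\subset I$, $I''\subset I$, then $\phi(O_{p,I''})$ is the subquotient-orbit labelled $I''\cup J$, and distinct labels give distinct subquotient-orbits; since every $O_{p,I''}$ with $I''\subset I$ has some $I''\cup J$ in the allowed range (as $J\subset I''\cup J\subset I$), the fibers of the induced map on $A_{p,I}$-orbits are exactly the sets $\{I'' : I''\cup J=I'\}$. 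I would cite Theorem~\ref{theorem:visual-order} (or Theorem~\ref{theorem:orbits-order}) and Theorem~\ref{theorem:order} to note that each term $|O_{p,I''}|$ and the denominator $|A_{p,J}|=p^{[J]}$ are explicitly computable, so the corollary yields a closed expression in $p$.
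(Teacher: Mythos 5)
Your proof is correct and follows essentially the same route as the paper: identify the preimage of the subquotient orbit as the disjoint union of the orbits $O_{p,I''}$ with $I''\cup J=I'$ (using the surjectivity and injectivity arguments from the proof of Theorem~\ref{theorem:subquotient}), then divide by the constant fiber size $|A_{p,J}|$. The paper states this more tersely but the argument is the same.
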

\begin{proof}
  By the proof of Theorem~\ref{theorem:subquotient}, the pre-image in $A_{p,I}$ of this orbit is the union of the orbits $O_{p,I^{\prime\prime}}$ as $I^{\prime\prime}$ ranges over ideals such that $I^{\prime\prime}\cup J=I'$, whence the result follows.
\end{proof}
\section{Maximal orbits}
\label{sec:maximal-orbits}
Each subquotient of the form $A_{p,I}/A_{p,J}$, where $J\subset I\subset P_\lambda$ are ideals, has a unique maximal $G_{p,\lambda}$-orbit which is not contained in $A_{p,I'}/A_{p,J}$ for any $J\subset I'\subsetneq I$, namely $\phi(O_{p,I})$ ($\phi$ is defined in the proof of Theorem~\ref{theorem:subquotient}).
In particular, $A_{p,\lambda}$ has a unique orbit (corresponding to the ideal $I=P_\lambda$) which does not intersect any proper subgroup of the form $A_{p,I}$.

A maximal orbit is, in a sense, dense:
\begin{theorem}
  \label{theorem:maximal-size}
  For each partition $\lambda$
  \begin{equation*}
    \lim_{p\to \infty} \frac{|\phi(O_{p,I})|}{|A_{p,I}/A_{p,J}|}=1
  \end{equation*}
\end{theorem}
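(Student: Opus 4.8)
The plan is to prove the equivalent statement that $p^{[J]}\,|\phi(O_{p,I})|$ is asymptotic to $p^{[I]}$ as $p\to\infty$; by Theorem~\ref{theorem:order} this suffices, since $|A_{p,I}/A_{p,J}| = p^{[I]}/p^{[J]} = p^{[I]-[J]}$. The method is a squeeze: I would trap the full preimage of the maximal orbit between $O_{p,I}$ and the ambient group $A_{p,I}$, and then invoke the closed formula for $|O_{p,I}|$.

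First I would identify this preimage. By the proof of Theorem~\ref{theorem:subquotient} (see also Corollary~\ref{cor:orbit-order}), the preimage of $\phi(O_{p,I})$ under the quotient map $\pi\colon A_{p,I}\to A_{p,I}/A_{p,J}$ is $\bigcup_{I''\cup J = I} O_{p,I''}$. Since $J\subset I$, the term $I''=I$ occurs, so this preimage contains $O_{p,I}$; and since $O_{p,I''}\subset A_{p,I''}\subset A_{p,I}$ whenever $I''\subset I$, the preimage is contained in $A_{p,I}$. As $\pi$ is the quotient by the subgroup $A_{p,J}$, the preimage of any single orbit has cardinality $|A_{p,J}|=p^{[J]}$ times the cardinality of that orbit (Theorem~\ref{theorem:order} again). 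Hence
\[
  |O_{p,I}| \;\le\; p^{[J]}\,|\phi(O_{p,I})| \;\le\; |A_{p,I}| \;=\; p^{[I]}.
\]

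Finally I would substitute the formula of Theorem~\ref{theorem:visual-order}, $|O_{p,I}| = p^{[I]}\prod_{x\in\max I}(1-p^{-m(x)})$, and divide the displayed inequalities by $p^{[I]-[J]}=|A_{p,I}/A_{p,J}|$, obtaining
\[
  \prod_{x\in\max I}\bigl(1-p^{-m(x)}\bigr) \;\le\; \frac{|\phi(O_{p,I})|}{|A_{p,I}/A_{p,J}|} \;\le\; 1 .
\]
Because $\max I$ is finite and every multiplicity $m(x)$ is a positive integer, the left-hand product tends to $1$ as $p\to\infty$ (the degenerate case $I=\emptyset$, where the subquotient is trivial and the empty product equals $1$, being immediate), and the squeeze theorem gives the claim.

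I do not expect any serious obstacle: the one step that carries the argument is recognizing that the preimage of the maximal orbit is sandwiched between $O_{p,I}$ and $A_{p,I}$; everything else is the already-available formulae of Theorems~\ref{theorem:order} and~\ref{theorem:visual-order}, and the only mild care needed is the bookkeeping of repeated parts of $\lambda$, which is already absorbed into the definitions of $[\,\cdot\,]$ and $m(\,\cdot\,)$. Alternatively one could bypass Theorem~\ref{theorem:visual-order} and argue directly from Corollary~\ref{cor:orbit-order} and Theorem~\ref{theorem:polynomial}: the numerator $\sum_{I''\cup J = I}|O_{p,I''}|$ is a polynomial in $p$ whose unique highest-degree term is the monic $|O_{p,I}|$ of degree $[I]$ (every other $I''$ being a proper subideal of $I$, hence with $[I'']<[I]$), so after dividing by $|A_{p,J}|=p^{[J]}$ one is left with a monic polynomial of degree $[I]-[J]=\dim$-worth exactly matching $|A_{p,I}/A_{p,J}|$ up to lower-order terms.
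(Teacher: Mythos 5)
Your proposal is correct and follows essentially the same route as the paper: the paper's one-line proof rests on exactly the two ingredients you isolate, namely the lower bound $|\phi(O_{p,I})|\geq |O_{p,I}|/|A_{p,J}|$ extracted from Corollary~\ref{cor:orbit-order} (your sandwich of the preimage between $O_{p,I}$ and $A_{p,I}$) and the product formula of Theorem~\ref{theorem:visual-order} to see that $|O_{p,I}|/|A_{p,I}|\to 1$. Your write-up merely makes the squeeze explicit, and your closing alternative via Theorem~\ref{theorem:polynomial} is a valid minor variant rather than a genuinely different argument.
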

\begin{proof}
  This is a consequence of Theorem~\ref{theorem:visual-order} and the observation that $|\phi(O_{p,I})|\geq|O_{p,I}|/|A_{p,J}|$ (which follows from Corollary~\ref{cor:orbit-order}).
\end{proof}
In particular, if $I'\subsetneq I$ are ideals in $P_\lambda$, then $|O_{p,I'}|/|O_{p,I}|\to 0$ as $p\to \infty$.
\subsection*{Acknowledgements}
We thank C. P. Anil Kumar and Pooja Singla for some helpful remarks.
\bibliographystyle{abbrv}
\bibliography{refs}
\end{document}